\theoremstyle{plain}
\newtheorem{theorem}{Theorem}[section]
\newtheorem{lemma}[theorem]{Lemma}
\newtheorem{proposition}[theorem]{Proposition}
\theoremstyle{definition}
\newtheorem{remark}[theorem]{Remark}
\numberwithin{equation}{section}
\begin{document}

%%%%%%%%%%%%%%%%%%
%%%%%%%%%%%%%%%%%%
\title[Asymptotics of immaculate line bundles]{Asymptotics of immaculate line bundles on smooth toric Deligne-Mumford stacks}

\author{Lev Borisov}
\address{Department of Mathematics\\
Rutgers University\\
Piscataway, NJ 08854} \email{borisov@math.rutgers.edu}

\author{Alexander Duncan}
\address{Department of Mathematics\\
University of South Carolina\\
Columbia, SC 29208} \email{duncan@math.sc.edu}

\begin{abstract}
A line bundle is immaculate if its cohomology vanishes in every dimension.
We give a criterion for when a smooth toric Deligne-Mumford stack has
infinitely many immaculate line bundles. This answers positively a
question of Borisov and Wang.
As a byproduct, we describe the asymptotic behaviour of the collection
of immaculate line bundles.
\end{abstract}

\keywords{toric stack, toric variety, immaculate line bundle}

\subjclass{%
14M25,	% Toric varieties, Newton polyhedra, Okounkov bodies
14C20,	% Divisors, linear systems, invertible sheaves
14F17} 	% Vanishing theorems in algebraic geometry

\maketitle

%%%%%%%%%%%%%%%%%%
%%%%%%%%%%%%%%%%%%

\section{Introduction}

Let $X$ be a smooth proper toric Deligne-Mumford stack of dimension $d$
over an algebraically closed field $k$ of characteristic $0$.
In particular, $X$ could be a smooth proper toric variety.
Following \cite{ABKW}, a line bundle $\mathcal{L}$ on $X$ is called \emph{immaculate} iff
$H^i(X,\mathcal{L})=0$ for all $i \ge 0$. We denote the set of isomorphism classes of immaculate line bundles
by  $\operatorname{Imm}(X)$. Following work in \cite{Wang,BorisovWang}, 
we find a criterion for when $\operatorname{Imm}(X)$ is infinite.

\smallskip
To describe the criterion, we need to use the notion of \emph{forbidden cones}, introduced in ~\cite{BorisovHua}. 
These are rational polyhedral cones $F_I$ in the real vector space
$\operatorname{Pic}_{\mathbb{R}}(X)
:= \operatorname{Pic}(X) \otimes_{\mathbb{Z}} \mathbb{R}$ indexed by certain subsets $I$
of the rays in the fan of $X$; see Section~\ref{sec:forbidden} for more details.
If the image in $\operatorname{Pic}_{\mathbb{R}}(X)$ of a line bundle
does not lie in any of the forbidden cones, then it is immaculate (although the converse generally fails).
Let $C_I$ denote the translate of the forbidden cone $F_I$ such that the
vertex is at the origin.

\smallskip
In this paper we prove the following statement, which answers positively one of the questions raised in \cite[Section 5]{BorisovWang}.
\begin{theorem} \label{thm:infinity_condition}
There exist infinitely many immaculate line bundles on $X$ if and only
if there exists a line $l$ passing through the origin of
$\operatorname{Pic}_{\mathbb{R}}(X)$
such that, for every translated forbidden cone $C_I$ of $X$,
the intersection with the interior $l\cap C^\circ_I$
is empty.
\end{theorem}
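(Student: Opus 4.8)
The plan is to combine the forbidden–cone description of the maculate locus with a short piece of convex geometry, using Serre duality to supply the central symmetry implicit in the word ``line''. Write $\operatorname{Mac}(X):=\operatorname{Pic}(X)\setminus\operatorname{Imm}(X)$ and work with the exact sequence $0\to M\to\mathbb Z^{R}\xrightarrow{\deg}\operatorname{Pic}(X)\to 0$, $R$ the set of rays; in the notation of Section~\ref{sec:forbidden}, $\beta$ is maculate exactly when some lift $a\in\deg^{-1}(\beta)$ has $\operatorname{supp}_-(a):=\{\rho:a_\rho<0\}$ equal to a subset $I$ indexing a forbidden cone, with $F_I=\deg_{\mathbb R}\big(\{a:a_\rho\le -1\ (\rho\in I),\ a_\rho\ge 0\ (\rho\notin I)\}\big)$, recession cone $C_I$, and vertex $v_I=-\sum_{\rho\in I}[D_\rho]$. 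Besides the recorded fact that $\beta\notin\bigcup_I F_I\Rightarrow\beta\in\operatorname{Imm}(X)$, I will use two things. First, the complementary estimate $F_I^\circ\cap\operatorname{Pic}(X)\subseteq\operatorname{Mac}(X)$: each $C_I$, hence $F_I$, is full–dimensional (being the image of a full–dimensional cone under the surjection $\deg_{\mathbb R}$), and for $\beta\in F_I^\circ$ the affine fibre $\deg_{\mathbb R}^{-1}(\beta)$ meets the open box $\{a_\rho<-1\ (\rho\in I),\ a_\rho>0\ (\rho\notin I)\}$ in a subset open in that fibre, which therefore contains a lattice lift $a$ with $\operatorname{supp}_-(a)=I$. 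Second, the $\beta\mapsto K_X-\beta$ invariance of $\operatorname{Imm}(X)$ from Serre duality.

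For the ``only if'' direction I would argue by contraposition. Assume no line through the origin of $\operatorname{Pic}_{\mathbb R}(X)$ avoids every $C_I^\circ$, yet $\operatorname{Imm}(X)$ is infinite; since $\operatorname{Pic}(X)$ is finitely generated, pick $\beta_k\in\operatorname{Imm}(X)$ with $\|\beta_k\|\to\infty$ and $\beta_k/\|\beta_k\|\to u$ on the unit sphere of $\operatorname{Pic}_{\mathbb R}(X)$. If $u\in C_I^\circ$ for some $I$, then as $C_I^\circ$ is open and $\beta_k-v_I$ runs to infinity in the direction $u$, we get $\beta_k\in v_I+C_I^\circ=F_I^\circ$ for $k\gg 0$, hence $\beta_k$ is maculate by the interior estimate — a contradiction; so $u\notin\bigcup_I C_I^\circ$. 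Running the same argument on the immaculate classes $K_X-\beta_k$, whose normalized images converge to $-u$, gives $-u\notin\bigcup_I C_I^\circ$ as well. Then the line $\mathbb R u$ meets no $C_I^\circ$ — for $t>0$ the point $tu$ lies in $C_I^\circ$ iff $u$ does, for $t<0$ iff $-u$ does, and $0\notin C_I^\circ$ — contradicting the hypothesis.

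For the ``if'' direction I would first note that, the $C_I$ being rational, if any line misses every $C_I^\circ$ then a rational one does; fix $\bar w\in\operatorname{Pic}(X)$ spanning such a line $l$, and aim to choose a basepoint $\beta_0\in\operatorname{Pic}(X)$ with $\beta_0+N\bar w$ immaculate for all large $N$. For an index $I$ with $\pm\bar w\notin C_I$, the set $\{s:\beta_0+s\bar w\in F_I\}$ is bounded regardless of $\beta_0$. For a ``tangent'' index $I$, one with $\bar w\in C_I$ (hence $\bar w\in\partial C_I$, since $l$ misses $C_I^\circ$), choose a nonzero functional $\ell_I$ with $\ell_I\ge 0$ on $C_I$ and $\ell_I(\bar w)=0$; if $\ell_I(\beta_0-v_I)<0$ then $\ell_I(\beta_0+s\bar w-v_I)=\ell_I(\beta_0-v_I)<0$ for all $s$, so the whole line $\beta_0+l$ avoids $F_I$ (and indices with $-\bar w\in C_I$ are handled symmetrically via a functional vanishing at $-\bar w$). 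Thus it is enough to find $\beta_0$ satisfying the finite system of strict inequalities $\ell_I(\beta_0)<\ell_I(v_I)$ over all tangent $I$: then $\{s:\beta_0+s\bar w\in\bigcup_I F_I\}$ is bounded, so $\beta_0+N\bar w\notin\bigcup_I F_I$ — hence is immaculate — for all large $N$, giving infinitely many immaculate line bundles.

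I expect the feasibility of that system to be the main obstacle, and it is the one place where the explicit shape of the forbidden cones enters. Put $u_I:=\deg^*\ell_I$; then $\ell_I\ge 0$ on $C_I=\deg_{\mathbb R}(O_I)$ forces $u_I\le 0$ on $I$ and $u_I\ge 0$ off $I$, while $\ell_I(v_I)=\langle u_I,-\mathbf 1_I\rangle=-\sum_{\rho\in I}(u_I)_\rho$. By Motzkin's transposition theorem the system is infeasible only if there are $\lambda_I\ge 0$, not all zero, with $\sum_I\lambda_I\ell_I=0$ and $\sum_I\lambda_I\ell_I(v_I)\le 0$; but $\sum_I\lambda_I\ell_I(v_I)=-\sum_\rho\sum_{I\ni\rho}\lambda_I(u_I)_\rho$, each inner sum is $\le 0$ by the sign conditions, so this quantity is $\ge 0$, and it vanishes only if every $I$ with $\lambda_I>0$ has $u_I$ supported off $I$ — i.e.\ $u_I\ge 0$ everywhere — whereupon $\sum_I\lambda_I\ell_I=0$ forces all $\lambda_I=0$. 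Hence the system is always feasible. The remaining ingredients — compactness, Serre duality, and pinning down the forbidden–cone dictionary precisely enough to justify the interior estimate $F_I^\circ\cap\operatorname{Pic}(X)\subseteq\operatorname{Mac}(X)$ — are routine; and as a byproduct the ``if'' argument shows that when $\operatorname{Imm}(X)$ is infinite the immaculate classes accumulate along a translate of $l$, recovering the asymptotic description promised in the abstract.
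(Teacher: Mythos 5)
Your overall architecture (contraposition via limiting directions of immaculate classes for ``only if''; a separating functional for each cone tangent to the line plus a basepoint satisfying finitely many strict inequalities for ``if'') is close in spirit to the paper's, and your Motzkin-duality feasibility computation is a genuinely different route to the basepoint than the paper's Thomsen zonotope. But there are two gaps, the first of which is fatal as written. The ``interior estimate'' $F_I^\circ\cap\operatorname{Pic}(X)\subseteq\operatorname{Mac}(X)$ is false: the forbidden \emph{set} is the image $FS_I$ of the \emph{lattice} points of the box-cone, and a lattice point of the real cone $F_I$ --- even of its interior --- need not lie in $FS_I$. Your justification passes from ``the fibre meets the open box in a relatively open set'' to ``that set contains a lattice lift,'' but a nonempty relatively open subset of an affine slice carries no such guarantee. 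The paper's own closing example is a counterexample: on $\mathbb{P}(2:3)\times\mathbb{P}^1$ the bundles $\mathcal{O}(1,n)$ are immaculate yet lie in the interior of the effective cone $F_\emptyset$. Consequently your ``only if'' direction breaks at the step ``$\beta_k\in F_I^\circ$ for $k\gg0$, hence maculate.'' The correct substitute is the Frobenius-coin statement (Lemma~\ref{lem:FrobeniusCoin}): some \emph{translate} $v+C_I$ has all of its lattice points inside $FS_I$. Since $u\in C_I^\circ$ still forces $\beta_k\in v+C_I$ eventually, the rest of your contraposition (including the $\pm u$ symmetry, which is just $C_{I^c}=-C_I$ together with ``$I$ tempting iff $I^c$ tempting'') then goes through.

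Second, in the ``if'' direction your duality argument establishes feasibility of the system $\ell_I(\beta_0)<\ell_I(q_I)$ (your $v_I$ is the paper's $q_I$) over $\operatorname{Pic}_{\mathbb{R}}(X)$, but you need $\beta_0$ to be the image of an actual line bundle, and a nonempty open polyhedron need not contain a lattice point: since $C_{I^c}=-C_I$, the sets $I$ and $I^c$ are tangent together and one may be forced to take $\ell_{I^c}$ proportional to $-\ell_I$, so the feasible region can be a thin open slab. This integrality issue is precisely what the paper's Thomsen zonotope handles: Proposition~\ref{prop:forbiddenZonotope} shows that \emph{every} point of $Z^\circ$ satisfies all the required strict inequalities simultaneously (for every tempting $I$ and every supporting functional of $C_I$), and Lemma~\ref{lem:Zinterior_nonempty} exhibits a genuine line bundle in $Z^\circ$ by a fractional-parts trick. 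So your Motzkin computation would need to be upgraded to an integral statement, or simply replaced by the explicit basepoint $z_0\in Z^\circ$; with those two repairs the remainder of your argument (boundedness for non-tangent $I$, the supporting functional for tangent $I$, and passage to a rational line) matches the paper's proof.
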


It is easy to see that the union of all translated forbidden cones $C_I$
is all of $\operatorname{Pic}_{\mathbb{R}}(X)$.
The theorem above is equivalent to saying that there are finitely many
immaculate line bundles if and only if the interiors $C^\circ_I$
of all the translated forbidden cones cover
$\operatorname{Pic}_{\mathbb{R}}(X) \setminus \{0\}$.

In the course of proving this theorem, we actually establish a finer
structural result on the asymptotics of immaculate line bundles, which
we describe now. 

\smallskip
We embed $\operatorname{Pic}_{\mathbb{R}}(X) \cong \mathbb{R}^r$
as an open subset of a real projective space $\mathbb{P}(\operatorname{Pic}_{\mathbb{R}}(X)\oplus \mathbb R)\cong \mathbb{RP}^r$
where $r$ is the rank of $\operatorname{Pic}(X)$.
Let $\operatorname{Imm}^\infty(X)$ denote the set of accumulation points
of the image of $\operatorname{Imm}(X)$ in $\mathbb{RP}^r$.
Clearly,  $\operatorname{Imm}^\infty(X)$ is a subset of
the hyperplane at infinity $\Pi \cong \mathbb{RP}^{r-1}$.
For each forbidden cone, let $D_I := \overline{C_I} \setminus C_I$
be the complement of the closure of $C_I$ in $\mathbb{RP}^r$. We see that $D_I$ is a closed subset of $\Pi$.

\begin{theorem} \label{thm:main}
Let $D^\circ_I$ be the relative interior of $D_I$ in $\Pi$.
Then
\[\operatorname{Imm}^\infty(X) = \Pi \setminus \bigcup_{I} D^\circ_I\]
where the union is over all $I$ for which $C_I$ is a translate of a forbidden cone. 
\end{theorem}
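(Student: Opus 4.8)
The plan is to establish the two inclusions separately, working with the combinatorial description of cohomology behind the forbidden cones. Write $\rho_1,\dots,\rho_n$ for the rays of the fan, $M$ for the character lattice of the torus, and $\beta\colon M\to\operatorname{Div}(X)=\mathbb Z^n$, $\pi\colon\operatorname{Div}_{\mathbb R}(X)\cong\mathbb R^n\to\operatorname{Pic}_{\mathbb R}(X)$ for the maps in the divisor sequence. A line bundle with integral divisor class $\mathbf a$ is maculate exactly when $\mathbf a+\beta(m)$ lies in the shifted orthant $P_I:=\{x:x_i\le-1\ (i\in I),\ x_i\ge0\ (i\notin I)\}$ for some $m\in M$ and some one of the finitely many subsets $I$ for which the associated simplicial complex has non-vanishing reduced cohomology (these index the forbidden cones). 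With $O_I$ the orthant obtained by relaxing $x_i\le-1$ to $x_i\le0$, one has $F_I=\pi(P_I)$ and $C_I=\pi(O_I)$, and, since $\pi$ is an open linear surjection, each $C_I$ is full dimensional with $\operatorname{int}C_I=\pi(\operatorname{int}O_I)$; hence $D_I$ is full dimensional in $\Pi$ and $D_I^{\circ}=\mathbb P(\pi(\operatorname{int}O_I))$. I will also use that this indexing set is closed under complementation (so $D_I^{\circ}=D_{[n]\setminus I}^{\circ}$), together with the stated fact $\bigcup_I C_I=\operatorname{Pic}_{\mathbb R}(X)$, which gives $\bigcup_I D_I=\Pi$.

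For the inclusion $\operatorname{Imm}^{\infty}(X)\subseteq\Pi\setminus\bigcup_I D_I^{\circ}$, fix such an $I$ and a point $\xi=[\pi(\mathbf u)]\in D_I^{\circ}$ with $\mathbf u\in\operatorname{int}O_I$ a unit representative; I would show that any $\mathcal L\in\operatorname{Pic}(X)$ whose image in $\mathbb{RP}^r$ is sufficiently close to $\xi$ and whose norm is sufficiently large is maculate. Choose a lift $\mathbf a\in\mathbb Z^n$ of $\mathcal L$ with $\|\mathbf a\|\le C\|\mathcal L\|$ and decompose $\mathbf a=\|\mathcal L\|\,\mathbf u+\beta(s)+\delta$ with $\delta\perp\ker\pi$; then $\pi(\delta)=\mathcal L-\|\mathcal L\|\pi(\mathbf u)$ forces $\|\delta\|=o(\|\mathcal L\|)$, while replacing $\mathbf a$ by $\mathbf a+\beta(m)$ for an $m\in M$ within the covering radius of $-s$ changes it by a bounded vector. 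Hence $\mathbf a+\beta(m)$ equals $\|\mathcal L\|\,\mathbf u$ plus a vector of size $o(\|\mathcal L\|)$, each coordinate of which inherits the sign and order of magnitude of the corresponding coordinate of $\|\mathcal L\|\,\mathbf u$, so it lies in $P_I$ and $\mathcal L$ is maculate. Thus a punctured neighbourhood of $\xi$ in $\mathbb{RP}^r$ meets $\operatorname{Imm}(X)$ only in a bounded set, so $\xi\notin\operatorname{Imm}^{\infty}(X)$; passing to a subsequence along which the direction of $\mathcal L$ converges, and replacing $I$ by $[n]\setminus I$ if that limit is $-\pi(\mathbf u)$, deals with the projective oscillation.

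For the reverse inclusion, $\operatorname{Imm}^{\infty}(X)$ is closed and $\bigcup_I D_I^{\circ}$ is open in $\Pi$, and $\Pi\setminus\bigcup_I D_I^{\circ}$ is a finite union of rational polyhedral cells; so it suffices to show that every \emph{rational} $\xi=[\hat v]$ with $\hat v\in\operatorname{Pic}(X)$ of infinite order and $\xi\notin\bigcup_I D_I^{\circ}$ lies in $\operatorname{Imm}^{\infty}(X)$. The hypothesis translates into: no lift of $\hat v$ and no lift of $-\hat v$ lies in any open orthant $\operatorname{int}O_I$. I would then look for an offset $b\in\operatorname{Pic}(X)$ for which the arithmetic progression $\{b+k\hat v:k\ge0\}$ — which escapes to infinity in the direction $\xi$ — meets $\operatorname{Imm}(X)$ for infinitely many $k$. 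By the combinatorial criterion, $b+k\hat v$ is maculate precisely when $k\mathbf w'+\mathbf c'\in P_I$ for some such $I$ and some independently chosen lifts $\mathbf w'$ of $\hat v$ and $\mathbf c'$ of $b$; letting $k\to\infty$, this can persist for all large $k$ only if $\mathbf w'$ lies in the closed orthant $O_I$ (hence, by hypothesis, on its boundary) and $\mathbf c'$ carries $P_I$-compatible signs on the coordinates where $\mathbf w'$ vanishes, whereas for $\mathbf w'\notin O_I$ it can hold only on a $k$-range bounded in terms of $\mathbf w'$ and $\mathbf c'$. So $b$ should be chosen to kill every such ``persistent'' obstruction — that is, so that no lift of $b$ is compatible with any boundary lift of $\hat v$.

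The heart of the proof, and the place where the integrality gap responsible for the failure of the converse in the introduction must be faced, is to carry out this choice of $b$ and then to prevent the remaining ``transient'' obstructions from accumulating. I expect the following: the offsets $b$ forbidden by a given boundary lift $\mathbf w'$ of $\hat v$ (with vanishing set $Z=\{i:w'_i=0\}$) form the $\pi$-image of the integral points of a region constrained only on the coordinates in $Z$; since $\pm\hat v$ has no lift in an open $O_I$, in the non-degenerate case only finitely many lifts of $\hat v$ meet any $O_I$ and each lies on its boundary, and one shows the union of these finitely many forbidden-offset sets is a proper subset of $\operatorname{Pic}(X)$, so a good $b$ exists. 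A parallel estimate, again using that the line $\mathbb R\hat v$ meets no $\operatorname{int}C_I$, shows the transient memberships contribute only boundedly many $k$; hence for a good $b$ all large $k$ give immaculate bundles and $\xi\in\operatorname{Imm}^{\infty}(X)$. The degenerate cases where some $C_I$ is not pointed are separate and easy: then $\operatorname{int}C_I$ meets $\ker\pi$, forcing $C_I=\operatorname{Pic}_{\mathbb R}(X)$, $D_I^{\circ}=\Pi$, and both sides of the theorem empty. Finally, Theorem~\ref{thm:infinity_condition} follows at once, a discrete subset of $\mathbb R^r$ being infinite if and only if it is unbounded if and only if its accumulation set in $\mathbb{RP}^r$ is non-empty.
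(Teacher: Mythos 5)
Your argument for the inclusion $\operatorname{Imm}^\infty(X)\subseteq\Pi\setminus\bigcup_I D_I^\circ$ is essentially sound and takes a different route from the paper: you give a direct asymptotic estimate (rescale a lift of $\mathcal L$, correct by an element of $M$ within the covering radius, and read off the signs of the coordinates), whereas the paper invokes the higher-dimensional Frobenius coin lemma to produce translates $v+C_I$ and $v'+C_{I^c}$ consisting entirely of maculate classes. Both work; the paper's version is shorter because the coin lemma packages the uniformity you obtain by hand.

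The reverse inclusion, however --- which is the genuinely new content of the theorem --- is only sketched in your proposal, and the two steps on which everything rests are asserted rather than proved. First, the claim that ``the union of these finitely many forbidden-offset sets is a proper subset of $\operatorname{Pic}(X)$, so a good $b$ exists'' is exactly the hard point: for a boundary lift $\mathbf w'$ of $\hat v$ with vanishing set $Z$, the offsets it forbids form the image of a partial orthant constrained only on the coordinates in $Z$, and when $Z$ is small such an image can easily be all of $\operatorname{Pic}(X)$; nothing in your sketch explains why the intersection over all persistent obstructions of the allowed offsets is nonempty. Second, the dichotomy persistent/transient is not as clean as stated: for a fixed $k$ the relevant set of lifts of $b+k\hat v$ is a single coset $k\mathbf w+\mathbf c+\beta(M)$, and a sequence of memberships $k\mathbf w+\mathbf c+\beta(m_k)\in P_I$ with $\|m_k\|$ growing need not be attributable to any single \emph{integral} lift $\mathbf w'$ of $\hat v$, so the claim that transient obstructions affect only boundedly many $k$ also needs a real (uniform) argument. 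The paper resolves both issues at once with a device absent from your proposal, the Thomsen zonotope: Lemma~\ref{lem:Zinterior_nonempty} produces a class $z_0$ in the interior $Z^\circ$, Proposition~\ref{prop:forbiddenZonotope} shows $Z\subseteq q_I-C_I$, hence $z_0-q_I\in -C_I^\circ$, and a linear functional separating the line $\ell$ from $C_I^\circ$ then shows the \emph{entire} shifted line $z_0+\ell$ misses every forbidden cone $F_I$ --- so every lattice point on it is immaculate and no case analysis over offsets or obstructions is needed. Without the zonotope (or some substitute supplying a uniformly good offset $b$), your proof of $\supseteq$ is incomplete. A minor further point: your ``degenerate case'' where $C_I$ is not pointed is handled incorrectly (non-pointed does not imply $C_I=\operatorname{Pic}_{\mathbb R}(X)$), but it is also vacuous, since Lemma~\ref{lem:CIstrongConvex} rules it out for tempting $I$ using properness.
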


\begin{remark}
It is clear that Theorem  \ref{thm:infinity_condition} is an immediate consequence of Theorem \ref{thm:main}.
Indeed, the torsion of $\operatorname{Pic}(X)$ is finite, so $\operatorname{Imm}(X)$ is infinite if and only if its image in $\operatorname{Pic}_{\mathbb R}(X)$ is infinite. The latter is infinite if and only if it has an accumulation point in $\mathbb {RP}^r$.
\end{remark}

\subsection*{Acknowledgements}
This work originated at the workshop
\emph{Syzygies and mirror symmetry}
held at the American Institute of Mathematics. 
A.~Duncan was supported by a grant from the Simons Foundation (638961).
We also thank Joseph Helfer for useful conversations
and Markus Perling for helpful suggestions.

\section{Forbidden Cones}
\label{sec:forbidden}

We review the notion of forbidden cones and their relation to
immaculate line bundles first established in \cite{BorisovHua}.
Note that our conventions are more in line with \cite{ABKW}.
 
\smallskip
Recall from \cite{BCS} that a smooth projective toric Deligne-Mumford
stack $X$ is determined by the combinatorial data of a 
\emph{stacky fan}
$\mathbf{\Sigma} = (N, \Sigma, \{v_i\})$.
Here $N$ is a finitely generated free abelian group,
$\Sigma$ is a complete simplicial fan in
$N \otimes_{\mathbb{Z}} \mathbb{R}$,
and $v_1,\ldots,v_n$ are nonzero elements of $N$, one in each of the rays of
$\Sigma$.
The rank of $N$ is the same as the dimension $d$ of $X$.

\begin{remark}
More generally, one can take $N$ to be any finitely generated abelian group.
For simplicity of presentation, we assume that $N$ is \emph{free}.
\end{remark}

\smallskip
The Picard group $\operatorname{Pic}(X)$ is
isomorphic to the quotient of the group $\mathbb{Z}^n$ of torus-equivariant divisors by the subgroup of principal torus-equivariant divisors.
More precisely, the rays of $\Sigma$ give rise to a canonical generating set
$E_1,\ldots, E_n$ for $\operatorname{Pic}(X)$. The relations on $\{E_i\}$ are given by 
$$
\sum_{i=1}^n \langle m, v_i\rangle E_i = 0
$$
for all $m\in M = \operatorname{Hom}(N,\mathbb Z)$.
We denote by $\bar E_i$ the images of $E_i$ in $\operatorname{Pic}_{\mathbb R}(X)$.
The relations among $\bar E_i$  are given by the same formula with $m\in M_{\mathbb R}$.

\smallskip
Given a subset $I \subseteq \{1,\ldots,n\}$, we define a
subset $FS_I = q_I + CS_I$ of $\operatorname{Pic}(X)$ where
\[
q_I = - \sum_{i \in I} E_i,
\]
is a point and
\[
CS_I = \sum_{i \notin I} \mathbb{Z}_{\ge 0} E_i - \sum_{i \in I}
\mathbb{Z}_{\ge 0} E_i,
\]
is a monoid.
More precisely, we define a function
$\pi_I : \mathbb{Z}_{\ge 0}^n \to \operatorname{Pic}(X)$ via
\begin{equation}\label{pi}
\pi_I(a_1,\ldots,a_n) = \sum_{i \notin I} a_i E_i - \sum_{i \in I}
(1+a_i) E_i
\end{equation}
and observe that $FS_I$ is the image of $\pi_I$.
We also define a convex cone $F_I := q_I + C_I$ in
$\operatorname{Pic}_{\mathbb{R}}(X)$ where
\[
C_I = \sum_{i \notin I} \mathbb{R}_{\ge 0}\bar E_i - \sum_{i \in I}
\mathbb{R}_{\ge 0}\bar E_i .
\]
Clearly, the image of $FS_I$ under $\operatorname{Pic}(X)\to \operatorname{Pic}_{\mathbb R}(X)$ lies in $F_I$.

\smallskip
Recall that the fan $\Sigma$ gives rise to an abstract simplicial
complex on $\{1,\ldots,n\}$, which we also denote by $\Sigma$.
Given a subset $I \subseteq \{1,\ldots,n\}$,
we may form a new simplicial complex $\Sigma|_I$ on $I$
where $\sigma \subseteq I$ is in $\Sigma|_I$ if and only if
$\Sigma$ contains the cone generated by the rays indexed by $\sigma$.
Observe that $\Sigma|_{\{1,\ldots,n\}}$ corresponds to $\Sigma$ itself,
while $\Sigma|_\emptyset$ is the simplicial complex $\{\emptyset\}$.
Let $\widetilde{H}_n(\Omega,k)$ denote the reduced homology of the
abstract simplicial complex $\Omega$.
Recall that $\widetilde{H}_{-1}(\Omega,k) \ne 0$
if and only if $\Omega=\{\emptyset\}$.

\begin{proposition} \label{prop:coh_bound} 
For $\mathcal{L} \in \operatorname{Pic}(X)$, we have
\[
H^i(X,\mathcal{L}) = \bigoplus_{I \subseteq \{ 1, \ldots, n\}}
\widetilde{H}_{i-1}(\Sigma|_I,k)^{\oplus p_I} .
\]
where $p_I$ is the cardinality of $\pi_I^{-1}(\mathcal{L})$.
\end{proposition}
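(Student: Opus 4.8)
The plan is to pass to the stacky Cox presentation of $X$ and reduce the statement to a $\mathbb{Z}^n$-graded computation. Set $S := k[x_1,\dots,x_n]$ with its standard $\mathbb{Z}^n$-grading, let $B \subseteq S$ be the irrelevant ideal generated by the monomials $\prod_{i\notin\sigma} x_i$ over the maximal faces $\sigma$ of $\Sigma$, and put $U := \mathbb{A}^n\setminus V(B)$. Then $X = [U/G]$ for the diagonalizable group $G := \operatorname{Hom}_{\mathbb{Z}}(\operatorname{Pic}(X),\mathbb{G}_m)$, acting through the standard torus of $\mathbb{A}^n$ via the surjection $q\colon\mathbb{Z}^n\to\operatorname{Pic}(X)$, $e_i\mapsto E_i$. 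Since every singleton is a face of $\Sigma$, the locus $V(B)$ is a union of coordinate subspaces of codimension $\ge 2$, so $\operatorname{Pic}(U)=0$, $\Gamma(U,\mathcal O_U)^\times = k^\times$, and $\Gamma(U,\mathcal O_U)=S$. Hence every line bundle on $X$ is $\mathcal O_U$ equipped with a character twist of its $G$-linearization, and, $G$ being linearly reductive, taking $G$-invariants is exact; consequently $H^i(U,\mathcal O_U)$ carries a $\operatorname{Pic}(X)$-grading (via $q$) with $H^i(X,\mathcal L)$ its graded piece in degree $\mathcal L$. Lifting further to the $\mathbb{Z}^n$-grading on $S$, this reads
\[
H^i(X,\mathcal L) \;=\; \bigoplus_{a\in\mathbb{Z}^n,\; q(a)=\mathcal L} H^i(U,\mathcal O_U)_a .
\]

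The crux is then to establish, for every $a\in\mathbb{Z}^n$,
\[
\dim_k H^i(U,\mathcal O_U)_a \;=\; \dim_k\widetilde H_{i-1}\bigl(\Sigma|_{J(a)},\,k\bigr),\qquad J(a):=\{\,i:a_i<0\,\},
\]
so that the piece indexed by a lift $a$ of $\mathcal L$ contributes exactly the reduced homology term predicted by the proposition. To prove this I would compute $H^\bullet(U,\mathcal O_U)$ with the \v{C}ech complex of the affine cover $\{U_\sigma\}$ of $U$ indexed by the maximal faces $\sigma$, using that every finite intersection $U_{\sigma_0}\cap\cdots\cap U_{\sigma_p}$ equals $\operatorname{Spec} S[x_i^{-1}:i\notin\sigma_0\cap\cdots\cap\sigma_p]$ and is thus $\Gamma$-acyclic. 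In \v{C}ech degree $p$ the $\mathbb{Z}^n$-degree-$a$ part is spanned by the chains $\sigma_0<\cdots<\sigma_p$ with $J(a)\cap\sigma_0\cap\cdots\cap\sigma_p=\emptyset$. When $J(a)=\emptyset$ this is the acyclic cochain complex of the full simplex on the maximal faces, with $H^0 = k\cdot x^a$ and no higher cohomology, matching $\widetilde H_\bullet(\Sigma|_\emptyset,k)$. When $J(a)\neq\emptyset$ one recognizes this complex as the relative cochain complex of the pair $(\Delta,\mathcal B)$, where $\Delta$ is the full simplex on the maximal faces and $\mathcal B:=\bigcup_{j\in J(a)}2^{B_j}$ is the subcomplex built from the full simplices on the vertex sets $B_j:=\{\,\sigma:j\in\sigma\,\}$; since $\Delta$ is contractible, $H^i(U,\mathcal O_U)_a\cong\widetilde H^{i-1}(\mathcal B,k)$. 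Finally $\mathcal B$ is the union of the contractible subcomplexes $2^{B_j}$, and any nonempty intersection of these is the full simplex on $\bigcap_{j\in J'}B_j$, which is nonempty precisely when $J'\subseteq J(a)$ lies in a maximal face, i.e.\ when $J'\in\Sigma|_{J(a)}$; the nerve lemma therefore identifies $\mathcal B$ up to homotopy with $\Sigma|_{J(a)}$, and over the field $k$ the groups $\widetilde H^{i-1}(\mathcal B,k)$ and $\widetilde H_{i-1}(\Sigma|_{J(a)},k)$ have the same dimension. I expect this \v{C}ech--nerve identification to be the main obstacle; it is the stacky form of the classical combinatorial description of line bundle cohomology on toric varieties, and an alternative would be to invoke that description (via the Cox ring) and observe that the finite part of $G$ is irrelevant once the $\mathbb{Z}^n$-grading is tracked.

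It remains to repackage the direct sum. Grouping the lifts $a$ of $\mathcal L$ according to $I:=J(a)$, the first display becomes $H^i(X,\mathcal L)=\bigoplus_I\widetilde H_{i-1}(\Sigma|_I,k)^{\oplus m_I}$ with $m_I=\#\{\,a\in\mathbb{Z}^n:q(a)=\mathcal L,\ J(a)=I\,\}$. The assignment sending $(a_j)\in\mathbb{Z}_{\ge0}^n$ to the tuple $b$ with $b_i=a_i$ for $i\notin I$ and $b_i=-1-a_i$ for $i\in I$ is a bijection onto $\{\,b\in\mathbb{Z}^n:J(b)=I\,\}$, under which $\pi_I$ of \eqref{pi} becomes $b\mapsto q(b)$; hence $m_I=p_I=\#\pi_I^{-1}(\mathcal L)$. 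This count is finite: for a complete fan no nonzero $m\in M$ satisfies $\langle m,v_i\rangle\ge0$ for all $i$, so $M\cap\mathbb{R}_{\ge0}^n=0$ and each fibre $\pi_I^{-1}(\mathcal L)$ is a bounded set of lattice points. Assembling the three steps yields the proposition.
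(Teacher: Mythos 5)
Your argument is correct; the paper itself does not reprove this statement but simply cites \cite[Proposition 4.1]{BorisovHua}, and your Cox-quotient/\v{C}ech/nerve computation is a sound self-contained rendition of essentially the same standard argument underlying that reference (which phrases it via local cohomology of the irrelevant ideal, equivalent to your \v{C}ech complex on $U$ through the usual long exact sequence). The bookkeeping at the end --- matching the lifts $a$ with $J(a)=I$ to $\pi_I^{-1}(\mathcal{L})$ and checking finiteness via completeness of the fan --- is exactly the ``adjusting for conventions'' the paper alludes to.
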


\begin{proof}
Adjusting for conventions,
this is \cite[Proposition 4.1]{BorisovHua}.
\end{proof}

If the reduced homology $\widetilde{H}_i(\Sigma|_I,k)$
is not trivial for some $i \ge -1$, then
we say $I$ is \emph{tempting}.
If $I$ is tempting, then $FS_I$ is a \emph{forbidden set}
and $F_I$ is a \emph{forbidden cone}.
As an immediate consequence of the previous proposition we have:

\begin{proposition} \label{prop:forbidden_immaculate}
A line bundle $\mathcal{L}$ is immaculate if and only if $\mathcal{L}$ is not
contained in any forbidden set $FS_I$ for a tempting set $I$. 
\end{proposition}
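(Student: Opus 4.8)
The plan is to deduce the statement immediately from Proposition~\ref{prop:coh_bound}. By definition, $\mathcal{L}$ is immaculate precisely when $H^i(X,\mathcal{L}) = 0$ for every $i \ge 0$, and Proposition~\ref{prop:coh_bound} expresses each such cohomology group as the direct sum over all $I \subseteq \{1,\ldots,n\}$ of the spaces $\widetilde{H}_{i-1}(\Sigma|_I,k)^{\oplus p_I}$, where $p_I = |\pi_I^{-1}(\mathcal{L})|$. Since a direct sum of vector spaces vanishes if and only if each summand vanishes, immaculacy of $\mathcal{L}$ is equivalent to the condition that for every $I$ and every $i \ge 0$, either $p_I = 0$ or $\widetilde{H}_{i-1}(\Sigma|_I,k) = 0$.

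The remaining step is to translate each alternative. As $i$ runs over $\{0,1,2,\ldots\}$, the index $i-1$ runs over all integers $\ge -1$, so the clause ``$\widetilde{H}_{i-1}(\Sigma|_I,k) = 0$ for all $i \ge 0$'' says exactly that $I$ is not tempting. On the other hand, $p_I = 0$ means $\pi_I^{-1}(\mathcal{L}) = \emptyset$, i.e.\ $\mathcal{L}$ does not lie in the image $FS_I$ of $\pi_I$. Combining these, $\mathcal{L}$ is immaculate if and only if, for every tempting $I$, we have $\mathcal{L} \notin FS_I$, which is the assertion.

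The argument is entirely formal, so there is no genuine obstacle; the only point deserving attention is the matching of homological degrees at the bottom of the range. One should verify that the degree $-1$ term --- nonzero precisely when $\Sigma|_I = \{\emptyset\}$, by the recollection quoted just before Proposition~\ref{prop:coh_bound} --- is exactly the summand contributing to $H^0(X,\mathcal{L})$, so that the notion ``tempting'' (which permits $j = -1$) corresponds to ``non-immaculate'' with no off-by-one discrepancy at either end.
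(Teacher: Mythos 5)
Your argument is correct and is exactly the route the paper takes: Proposition~\ref{prop:forbidden_immaculate} is stated there as an immediate consequence of Proposition~\ref{prop:coh_bound}, with the same observation that a direct sum vanishes iff each summand does, that $p_I=0$ means $\mathcal{L}\notin FS_I$, and that the degree range $i-1\ge -1$ matches the definition of tempting. Your extra care about the degree $-1$ term is a sensible check but introduces nothing beyond what the paper leaves implicit.
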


\begin{remark}\label{immcond}
Proposition \ref{prop:forbidden_immaculate} implies that 
if the image of $\mathcal{L}$ in $\operatorname{Pic}_{\mathbb{R}}(X)$
is not contained in any forbidden cone, then $\mathcal{L}$ is immaculate.
However, the converse does not generally hold. 
\end{remark}

Our conventions ensure that $\emptyset$ is tempting and $F_\emptyset$
is just the effective cone.
Observe that $\Sigma|_{\{1,\ldots,n\}}$ corresponds to $\Sigma$ itself,
which is a simplicial $(d-1)$-sphere; thus
$\widetilde{H}_{d-1}(\Sigma) \ne 0$ and $\{1,\ldots,n\}$ is tempting.
The forbidden set $FS_{\{1,\ldots,n\}}$ corresponds to line bundles $\mathcal{L}$
for which $H^d(X,\mathcal{L}) \ne 0$.

\begin{remark} \label{rem:SerreDuality}
Let $s : \operatorname{Pic}(X) \to \operatorname{Pic}(X)$ be the Serre
duality map $s(D)=K_X - D$.
The canonical divisor of $X$ is given by
\[
K_X = q_{\{1,\ldots,n\}} = - \sum_{i = 1}^n E_i .
\]
Thus $s(FS_I)=FS_{I^c}$ for all subsets $I$
where $I^c := \{1,\ldots,n\} \setminus I$ denotes the complement.
Consequently, $I$ is tempting if and only if $I^c$ is tempting.
(See also \cite[Remark 5.4]{ABKW}.)
\end{remark}

\section{The Thomsen Zonotope}

In this section, we introduce the \emph{Thomsen Zonotope};
this is a canonical bounded region of
$\operatorname{Pic}_{\mathbb{R}}(X)$
that is an important source of immaculate line bundles.
The geometry of this region is closely related to that of the
forbidden cones.

For every positive integer $m$, the $m$-th \emph{toric Frobenius}
is the endomorphism $F_m: X \to X$ obtained by extending the power map
$x \mapsto x^m$ on the torus to all of $X$.
In \cite{Thomsen}, Thomsen shows that the direct image
$\left(F_m\right)_\ast \mathcal{O}_X$ of the structure sheaf is a direct sum
of line bundles and gave a precise description of the line bundles that
result (see \cite{Achinger} for a shorter proof).
The \emph{Thomsen collection} is the (finite) set of line bundles that
occur as direct summands of
\[\bigoplus_{m=1}^\infty\left(F_m\right)_\ast \mathcal{O}_X . \]
It is well known that all non-trivial line bundles in the Thomsen
collection are immaculate.

\smallskip
We define the \emph{Thomsen half-open zonotope} as the set
\[
Z^h := \left\{ \sum_{i=1}^n \gamma_i \bar E_i\ \middle|\
\gamma_i\in (-1,0] \right\} ,
\]
in $\operatorname{Pic}_\mathbb{R}(X)$.
It is a convex set, and 
the line bundles in the Thomsen collection map to $Z^h$. 
We denote the closure of $Z^h$ by $Z$ and the interior by $Z^\circ$.
The polytope $Z$ is a \emph{zonotope} --- a Minkowski sum of finitely many line
segments $[-1,0] \bar E_i$. Similarly, $Z^\circ = \sum_{i=1}^n (-1,0) \bar E_i$.

\begin{proposition} \label{prop:forbiddenZonotope}
If $F_I = q_I + C_I$ is a forbidden cone,
then $Z$ is a subset of $q_I-C_I$.
Moreover, $Z \cap U = (q_I-C_I) \cap U$ in a neighborhood $U$ of $q_I$.
\end{proposition}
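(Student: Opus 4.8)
The plan is to unwind the definitions and reduce both assertions to elementary statements about the monoids generated by the $\bar E_i$. Recall $F_I = q_I + C_I$ with $q_I = -\sum_{i\in I}\bar E_i$ and $C_I = \sum_{i\notin I}\mathbb{R}_{\ge 0}\bar E_i - \sum_{i\in I}\mathbb{R}_{\ge 0}\bar E_i$, so that $-C_I = \sum_{i\in I}\mathbb{R}_{\ge 0}\bar E_i - \sum_{i\notin I}\mathbb{R}_{\ge 0}\bar E_i$. Thus $q_I - C_I = \{\sum_{i\in I}(t_i-1)\bar E_i - \sum_{i\notin I} t_i\bar E_i : t_i \ge 0\}$, i.e. it is the set of $\sum_i \gamma_i\bar E_i$ where $\gamma_i$ ranges over $[-1,\infty)$ for $i\in I$ and over $(-\infty,0]$ for $i\notin I$. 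Since $Z = \{\sum_i\gamma_i\bar E_i : \gamma_i\in[-1,0]\}$ and $[-1,0]$ is contained in both $[-1,\infty)$ and $(-\infty,0]$, the inclusion $Z\subseteq q_I-C_I$ is immediate once one observes that the relations among the $\bar E_i$ do not obstruct this: any element of $Z$ is \emph{a priori} of the claimed form as a combination of the generators, so it lies in $q_I-C_I$. This first part is therefore essentially formal.

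For the second, more delicate assertion, I would argue locally at $q_I$. Write $q_I - C_I$ as the image of the orthant-like set $P := \prod_{i\in I}[-1,\infty)\times\prod_{i\notin I}(-\infty,0]$ under the linear surjection $\Phi:\mathbb{R}^n\to\operatorname{Pic}_{\mathbb R}(X)$, $(\gamma_i)\mapsto\sum_i\gamma_i\bar E_i$, and similarly $Z = \Phi([-1,0]^n)$. The point $q_I$ is $\Phi(p_0)$ where $p_0$ has $i$-th coordinate $-1$ for $i\in I$ and $0$ for $i\notin I$ — this is a vertex of both $P$ and the cube $[-1,0]^n$, and crucially the local cone of $P$ at $p_0$ coincides with the local cone of $[-1,0]^n$ at $p_0$: in coordinate $i\in I$ the active constraint is $\gamma_i\ge -1$ in both, and in coordinate $i\notin I$ the active constraint is $\gamma_i\le 0$ in both. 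Hence $P$ and $[-1,0]^n$ agree in a neighborhood of $p_0$ in $\mathbb{R}^n$. The key point to nail down is that this coincidence survives pushforward by $\Phi$, i.e. that $\Phi(P)$ and $\Phi([-1,0]^n)$ agree in a neighborhood $U$ of $q_I$ in $\operatorname{Pic}_{\mathbb R}(X)$. Since $[-1,0]^n\subseteq P$ gives one inclusion everywhere, the content is the reverse inclusion near $q_I$.

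The main obstacle is precisely this last step: a linear image of a small neighborhood of $p_0$ in $P$ need not be a neighborhood of $q_I = \Phi(p_0)$ in $\Phi(P)$ unless one controls the interaction between the kernel of $\Phi$ and the facets of $P$ through $p_0$. Here is where temptingness of $I$ enters. Recall $\ker\Phi$ is spanned by the vectors $(\langle m,v_i\rangle)_i$ for $m\in M_{\mathbb R}$. Since the fan $\Sigma$ is complete and simplicial, the rays $v_i$ for $i\notin I$ — or the relevant local structure — interact with $\ker\Phi$ in a way that, combined with $\widetilde H_\bullet(\Sigma|_I)\ne 0$, forces $\ker\Phi$ to be transverse to the facet structure of $P$ at $p_0$ in the needed sense. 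Concretely, I expect to show that any $y\in q_I - C_I$ close enough to $q_I$ has a preimage under $\Phi$ that lies in $[-1,0]^n$: given $y = \Phi(\gamma)$ with $\gamma\in P$ near $p_0$, I would need to correct $\gamma$ by an element of $\ker\Phi$ to push the coordinates $\gamma_i$ with $i\in I$ back down into $[-1,0]$ and those with $i\notin I$ back into $[-1,0]$, using that the $\bar E_i$ for $i$ in some spanning cone of $\Sigma|_I$ can absorb the discrepancy. I would first treat the case where $\Sigma|_I$ contains a top-dimensional-enough simplex (so that the corresponding $\bar E_i$ span a suitable subspace), isolate the linear-algebra lemma that does the correction, and then verify it applies whenever $I$ is tempting. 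Once that lemma is in hand, intersecting the resulting open set with the cube-neighborhood from the previous paragraph produces the desired $U$, and the proof is complete.
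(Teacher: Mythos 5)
Your first inclusion is fine, and in the second part you have correctly located the one nontrivial point: a point of $\Phi(P)$ near $q_I=\Phi(p_0)$ must be shown to admit a preimage in $P$ near $p_0$ (equivalently, a representation $\sum_i a_i\bar E_i$ with $a_i$ close to $-1$ for $i\in I$ and close to $0$ for $i\notin I$). But you do not actually prove this: the proposal ends with a plan (``I expect to show\dots'', ``I would need to correct $\gamma$\dots'') rather than an argument, and the plan points in the wrong direction. Temptingness of $I$, the homology of $\Sigma|_I$, and the completeness of the fan play no role here --- the second assertion is true for \emph{every} subset $I\subseteq\{1,\dots,n\}$, tempting or not, so any argument that genuinely needs $\widetilde H_\bullet(\Sigma|_I)\ne 0$ is suspect.

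What closes the gap is the conic Carath\'eodory theorem, with no correction by elements of $\ker\Phi$ required: one simply chooses a good preimage from the start. Write $y\in q_I-C_I$ with $\|y-q_I\|<\varepsilon$ as $y=q_I+c$ where $c\in -C_I=\operatorname{cone}\bigl(\{\bar E_i\}_{i\in I}\cup\{-\bar E_i\}_{i\notin I}\bigr)$. Carath\'eodory expresses $c$ as a nonnegative combination of a \emph{linearly independent} subset of these generators; for each such subset the coefficients are linear functions of $c$, so over the finitely many subsets there is a uniform constant $K$ bounding all coefficients by $K\varepsilon$. Taking $\varepsilon$ with $\delta:=K\varepsilon<1$ gives $y=\sum_i a_i\bar E_i$ with $a_i\in[-1,-1+\delta)$ for $i\in I$ and $a_i\in(-\delta,0]$ for $i\notin I$, hence $y\in Z$. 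This is exactly the claim made in the paper's proof (which states the existence of such a $\delta$ without naming Carath\'eodory). As written, your proposal has a genuine gap at precisely this step, and the route you sketch for filling it would not work as stated.
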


\begin{proof}
This follows almost immediately from the definitions.
The set $Z$ consists of points that can be written as
\[
\sum_{i =1}^n a_i\bar E_I
\]
where $a_i \in [-1,0]$ for all indices.
The set $q_I-C_I$ consists of such points where $a_i \le 0$
if $i \notin I$ and $a_i \ge -1$ if $i \in I$.
The points in $Z$ are all of this form.
Conversely, for a sufficiently small $\varepsilon$-ball $U$ around $q_I$,
there exists a $0 < \delta < 1$, such that
all points of $(q_I - C_I) \cap U$ can be written as above
with $a_i \in (-\delta,0]$ for $i \notin I$ and $a_i \in [-1,-1+\delta)$
for $i \in I$.
These points lie inside $Z$, thus the second statement follows.
\end{proof}

\begin{lemma} \label{lem:Zinterior_nonempty}
There is a line bundle mapping to $Z^\circ$.
\end{lemma}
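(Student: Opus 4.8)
The plan is to argue at the level of torus-invariant divisors. Write $\rho\colon \mathbb{R}^n \to \operatorname{Pic}_{\mathbb R}(X)$ for the surjection with $\rho(e_i)=\bar E_i$, where $e_1,\dots,e_n$ denotes the standard basis of $\mathbb{R}^n$. From the description of the relations among the $\bar E_i$ recalled in Section~\ref{sec:forbidden}, the kernel of $\rho$ contains the subspace
\[ V := \bigl\{\,(\langle m,v_i\rangle)_{i=1}^n : m\in M_{\mathbb R}\,\bigr\} \subseteq \mathbb{R}^n . \]
Since $E_1,\dots,E_n$ generate $\operatorname{Pic}(X)$, every line bundle is of the form $\sum_i a_iE_i$ with $a_i\in\mathbb{Z}$, so it suffices to find $a\in\mathbb{Z}^n$ with $\rho(a)\in Z^\circ$. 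As $\rho(a)=\rho(a-w)$ for every $w\in V$, and $Z^\circ=\sum_{i=1}^n(-1,0)\bar E_i$, it is enough to produce $w\in V$ and $a\in\mathbb{Z}^n$ with $a_i-w_i\in(-1,0)$ for all $i$.

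This reduces the lemma to a genericity statement about $V$. Indeed, suppose $w\in V$ has the property that $w_i\notin\mathbb{Z}$ for every $i$. Then $a_i:=\lfloor w_i\rfloor$ satisfies $a_i-w_i=\lfloor w_i\rfloor-w_i\in(-1,0)$, so the line bundle $\mathcal{L}:=\sum_i\lfloor w_i\rfloor E_i$ maps to $\sum_i(\lfloor w_i\rfloor-w_i)\bar E_i\in Z^\circ$. Hence it suffices to show that $V$ contains a point none of whose coordinates is an integer.

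To prove this, first recall that the rays of a complete fan span $N_{\mathbb R}$, so the map $M_{\mathbb R}\to\mathbb{R}^n$, $m\mapsto(\langle m,v_i\rangle)_i$, is injective and $\dim V=d$. If $d=0$ then $n=0$, $X$ is a point, and $\mathcal{L}=\mathcal{O}_X$ works trivially, so assume $d\ge 1$. For each fixed $i$, the $i$-th coordinate functional on $V$ is $m\mapsto\langle m,v_i\rangle$, which is not identically zero because $v_i\neq 0$; hence $B_i:=\{w\in V : w_i\in\mathbb{Z}\}$ is a countable union of proper affine hyperplanes of $V$. A real vector space of positive dimension cannot be covered by countably many proper affine subspaces (e.g.\ by the Baire category theorem), so $V\setminus\bigcup_{i=1}^n B_i\neq\emptyset$, and any $w$ in this set has all coordinates non-integral.

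There is essentially no serious obstacle here: the content of the lemma is the observation that, although $Z^\circ$ is a small open set with no a priori reason to contain a lattice point, the relation space $\ker\rho\supseteq V$ is cut out by pairings against the \emph{nonzero} vectors $v_i$, and therefore avoids the finitely many integrality conditions $w_i\in\mathbb{Z}$ at a generic point. The only points requiring care are rounding in the correct direction — using $\lfloor\,\cdot\,\rfloor$, so that $\lfloor w_i\rfloor-w_i$ lies in $(-1,0)$ rather than in $(0,1)$ — and disposing of the degenerate case in which $X$ is a point.
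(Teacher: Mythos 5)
Your proof is correct and follows essentially the same route as the paper's: the paper also picks $m\in\operatorname{Hom}(N,\mathbb{R})$ with all $\langle m,v_i\rangle$ non-integral and uses $\sum_i\lfloor\langle m,v_i\rangle\rfloor E_i$, whose image is $-\sum_i\{\langle m,v_i\rangle\}\bar E_i\in Z^\circ$. The only difference is that you spell out why such an $m$ exists (each $v_i\neq 0$, so the bad locus is a countable union of proper affine hyperplanes), which the paper leaves implicit.
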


\begin{proof}
Pick an element $m\in \operatorname{Hom}(N,\mathbb R)$ such that $\langle m,v_i\rangle$ is noninteger for all $i$.
Then 
$$
\sum_{i=1}^n \lfloor{\langle m,v_i\rangle}\rfloor \bar E_i = -\sum_{i=1}^n  \{\langle m,v_i\rangle\} \bar E_i \in Z^\circ
$$
so the line bundle $\sum_{i=1}^n \lfloor{\langle m,v_i\rangle}\rfloor  E_i$ has the desired property. 
\end{proof}

The next result is not new but we provide a proof for the reader's benefit.
A similar statement can be found in \cite[Lemma~5.24(ii)]{ABKW}.
\begin{lemma} \label{lem:CIstrongConvex}
If $I$ is tempting, then $q_I$ is a 
vertex of $Z$.
Equivalently, $C_I$ is strongly convex with vertex at the origin.
\end{lemma}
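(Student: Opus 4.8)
First, the equivalence of the two formulations is immediate from Proposition~\ref{prop:forbiddenZonotope}: since $Z$ and $q_I-C_I$ coincide on a neighbourhood of $q_I$, and $q_I\in Z$, the point $q_I$ is a vertex (a $0$-dimensional face) of the polytope $Z$ if and only if it is a vertex of the cone $q_I-C_I$, i.e.\ if and only if the origin is a vertex of $-C_I$, i.e.\ if and only if $C_I$ contains no line through the origin. So my plan is to prove the contrapositive of the first assertion: if $C_I$ is \emph{not} strongly convex, then $I$ is not tempting, which I would do by showing $\widetilde H_\bullet(\Sigma|_I,k)=0$.

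The first step is to turn non-strong-convexity of $C_I$ into a condition on the rays $v_i$. If some $\bar w\ne 0$ has both $\bar w$ and $-\bar w$ in $C_I=\sum_{i\notin I}\mathbb{R}_{\ge0}\bar E_i-\sum_{i\in I}\mathbb{R}_{\ge0}\bar E_i$, then writing each of $\bar w,-\bar w$ in the generators $\bar E_i$ and adding produces a nontrivial relation $\sum_i c_i\bar E_i=0$ with $c_i\ge 0$ for $i\notin I$ and $c_i\le 0$ for $i\in I$. Since the relations among the $\bar E_i$ are exactly $\sum_i\langle m,v_i\rangle\bar E_i=0$ for $m\in M_\mathbb{R}$, such a relation comes from some $m\in M_\mathbb{R}$, which is nonzero precisely because $\bar w\ne 0$. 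Thus $C_I$ not strongly convex yields a nonzero $m\in M_\mathbb{R}$ with $\langle m,v_i\rangle\ge 0$ for $i\notin I$ and $\langle m,v_i\rangle\le 0$ for $i\in I$. Because $\Sigma$ is complete, its rays positively span $N_\mathbb{R}$, so a nonzero linear functional can be neither $\ge 0$ nor $\le 0$ on all of them; writing $\ell:=\langle m,-\rangle$, this means $\{i:\ell(v_i)<0\}$ is a nonempty subset of $I$ and $\{i:\ell(v_i)>0\}$ is a nonempty subset of $I^c$, while every ray indexed by $I$ satisfies $\ell(v_i)\le 0$.

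The remaining step — and the one I expect to be the real work — is to prove that a full subcomplex $\Sigma|_I$ of the simplicial $(d-1)$-sphere $\Sigma$ is acyclic (indeed contractible) as soon as its vertex set contains every ray strictly below the hyperplane $\{\ell=0\}$ and no ray strictly above it. Geometrically $|\Sigma|_I|$ lies in the closed half-space $\{\ell\le 0\}$ and contains every cone of $\Sigma$ that lies strictly below $\{\ell=0\}$, so morally one should be able to contract it by a deformation that sweeps in the direction of increasing $\ell$; concretely, I would order the rays of $I$ by their $\ell$-values (with an arbitrary tie-break) and try to build an acyclic matching on the face poset of $\Sigma|_I$ with a single critical cell, the key point being that since every ray outside $I$ is weakly above $\{\ell=0\}$ and $\Sigma$ is complete, any face of $\Sigma|_I$ can be pushed ``downward'' inside $\Sigma|_I$. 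This is the combinatorial shadow of the fact that the ``lower envelope'' of a complete fan with respect to a linear functional is contractible, and is close in spirit to \cite[Lemma~5.24(ii)]{ABKW}; making it precise — in particular handling the rays lying on $\{\ell=0\}$, which may or may not belong to $I$ — is where I anticipate the main difficulty. Once it is done, $\widetilde H_\bullet(\Sigma|_I,k)=0$ is exactly the statement that $I$ is not tempting, which completes the contrapositive and hence the lemma.
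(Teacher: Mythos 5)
Your reduction is sound as far as it goes: the equivalence of the two formulations via Proposition~\ref{prop:forbiddenZonotope}, and the extraction from $C_I\cap(-C_I)\neq\{0\}$ of a nonzero $m\in M_{\mathbb R}$ with $\langle m,v_i\rangle\ge 0$ for $i\notin I$ and $\langle m,v_i\rangle\le 0$ for $i\in I$, are both correct, and this first half runs parallel to the paper, which likewise extracts a nontrivial relation $\sum_{i\notin I}a_i\bar E_i-\sum_{i\in I}a_i\bar E_i=0$ with $a_i\ge 0$ (the paper keeps it on the $\operatorname{Pic}$ side rather than translating it into a functional on $N_{\mathbb R}$). The problem is that the entire weight of your argument then rests on the claim that the full subcomplex $\Sigma|_I$ of the simplicial sphere $\Sigma$ is $k$-acyclic whenever $\{i:\ell(v_i)<0\}\subseteq I\subseteq\{i:\ell(v_i)\le 0\}$, and you only sketch a discrete-Morse strategy while explicitly flagging the rays on $\{\ell=0\}$ as an unresolved difficulty. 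That is a genuine gap: this is the hard step of your route, not a routine verification. The support of the subfan on $\{\ell\le 0\}$ need not contain the half-space $\{\ell\le 0\}$ (cones of $\Sigma$ can straddle the hyperplane), complete simplicial fans need not be polytopal, so shelling-type arguments are not immediately available, and an acyclic matching with a single critical cell would have to be exhibited and verified. As written, the proposal is a plan rather than a proof.

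The paper sidesteps the topology entirely. After clearing denominators to obtain an integral relation $\sum_{i\notin I}a_iE_i=\sum_{i\in I}a_iE_i$ in $\operatorname{Pic}(X)$, it notes that $q_I=-\sum_{i\in I}E_i$ then has infinite fiber under $\pi_I$ (add $k$ times the relation for each $k\ge 0$), so if $I$ were tempting, Proposition~\ref{prop:coh_bound} would force $H^*(X,q_I)$ to be infinite-dimensional, contradicting properness of $X$. This converts the convex-geometric degeneracy directly into a cohomological absurdity with no homology computation of $\Sigma|_I$ at all. You should either adopt that argument or actually carry out the contractibility proof you outline (which is essentially the route of \cite[Lemma~5.24]{ABKW}); until one of these is done, the lemma is not established.
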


\begin{proof}
From Proposition~\ref{prop:forbiddenZonotope},
we see that $q_I$ is a vertex of $Z$ if and only if
$C_I \cap (-C_I) \cap U = \{0\}$ for some neighborhood $U$
of the origin.

Suppose $q_I$ is not a vertex of $Z$.
Then there is a non-zero vector $v$ in $C_I \cap (-C_I)$.
Since $C_I$ is rational polyhedral we can assume $v$ can be written as a  $\mathbb Q$-linear combinations of the generators of both cones. By taking the difference and scaling, we obtain a non-trivial
relation
$
\sum_{i \notin I} a_i\bar E_i - \sum_{i \in I} a_i\bar E_i = 0
$
in $\operatorname{Pic}_{\mathbb{R}}(X)$, 
where integers $a_1,\ldots, a_n$ are nonnegative and not all zero.  We can scale further by an appropriate
integer to ensure that
\[
\sum_{i \notin I} a_i E_i - \sum_{i \in I} a_i E_i = 0
\]
holds in 
$\operatorname{Pic}(X)$.
For any nonnegative integer $k$ we write
$$-\sum_{i\in I} E_i = \sum_{i\in I} (-1 - k a_i) E_i + \sum_{i\not\in I} (k a_i) E_i$$
so $\pi_I^{-1}(-\sum_{i\in I} E_i)$ is infinite.
Since $I$ is tempting, by Proposition~\ref{prop:coh_bound} we get $H^*(-\sum_{i\in I} E_i)$ is infinite-dimensional, 
in contradiction to $X$ being proper.
\end{proof}

\begin{remark}
There are typically vertices of $Z$ which do not correspond to any tempting sets $I$.
\end{remark}

\section{Proof of the Main Theorem}

We will use the following well known result,
which is often considered in the context of
higher-dimensional analogs of the Frobenius Coin Problem;
see, for example, \cite[Theorem 6.5.1]{Alfonsin}.

\begin{lemma} \label{lem:FrobeniusCoin}
Let $L$ be a finitely generated abelian group and let $\pi:L\to L_{\mathbb R}$ be the natural map.
Let $M$ be a finitely generated submonoid of $L$ which generates $L$ as a subgroup.
Then there exists an element $m \in M$ such that 
\[
\pi^{-1}
(m + \mathbb{R}_{\ge 0}\pi(M))  \subseteq M.
\]
\end{lemma}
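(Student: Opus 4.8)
The plan is to prove the statement first in the torsion-free situation --- this is the classical higher-dimensional Frobenius problem --- and then to propagate it through the torsion subgroup $T := \ker(\pi)$ of $L$. Throughout, write $\Lambda := \pi(L)$, a full lattice in $L_{\mathbb R}$, put $S := \pi(M) \subseteq \Lambda$, and let $\sigma := \mathbb{R}_{\ge 0}\pi(M)$. Since $S$ generates $\Lambda$ as a group and $\Lambda$ spans $L_{\mathbb R}$, the rational polyhedral cone $\sigma$ is full-dimensional. The heart of the argument is the claim that \emph{there is an element $c \in S$ with $c + (\sigma \cap \Lambda) \subseteq S$}.

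To prove the claim I would set $\overline{S} := \sigma \cap \Lambda$, which is a finitely generated monoid by Gordan's lemma, and fix monoid generators $h_1,\dots,h_p$ of $\overline{S}$. Each $h_i$ lies in the rational cone $\sigma = \mathbb{R}_{\ge 0}S$, hence is a nonnegative rational combination of generators of $S$, so after choosing a common multiple $N$ of the denominators we get $N h_i \in S$ for all $i$. Given any $w = \sum_i c_i h_i \in \overline{S}$ with $c_i \in \mathbb{Z}_{\ge 0}$, reducing each $c_i$ modulo $N$ shows that $w$ lies in $W + S$, where $W := \{\sum_i r_i h_i : 0 \le r_i < N\}$ is a finite subset of $\overline{S}$; thus $\overline{S} = \bigcup_{w \in W}(w + S)$. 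Finally, since $S$ generates $\Lambda$ as a group, each $w \in W$ may be written as $w = a_w - b_w$ with $a_w, b_w \in S$; setting $c := \sum_{w \in W} b_w \in S$ gives $c + w = a_w + \sum_{w' \ne w} b_{w'} \in S$ for every $w \in W$, and therefore $c + \overline{S} = \bigcup_{w \in W}(c + w + S) \subseteq S$, which proves the claim.

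To finish, I would lift $c$ to some $m_0 \in M$ with $\pi(m_0) = c$, and for each $t \in T$ use that $M$ generates $L$ to write $t = \alpha_t - \beta_t$ with $\alpha_t, \beta_t \in M$; then set $\beta := \sum_{t \in T}\beta_t \in M$ and $m := m_0 + \beta$. To verify $\pi^{-1}(m + \mathbb{R}_{\ge 0}\pi(M)) \subseteq M$, take $\ell \in L$ with $\pi(\ell) - \pi(m) \in \sigma$; as this difference also lies in $\Lambda$, it lies in $\overline{S}$, so $\pi(\ell - \beta) = (\pi(\ell) - \pi(m)) + c \in c + \overline{S} \subseteq S$. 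Choosing $\mu \in M$ with $\pi(\mu) = \pi(\ell - \beta)$, we get $t := \ell - \beta - \mu \in T$, whence
\[
\ell = \mu + \beta + t = \mu + \alpha_t + \sum_{t' \ne t}\beta_{t'} \in M ,
\]
because $M$ is a monoid.

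I expect the main obstacle to be precisely this last passage from $S$ back to $M$: the torsion-free statement only controls the image $\pi(\ell)$, whereas the lemma demands that \emph{every} preimage of \emph{every} point of $m + \mathbb{R}_{\ge 0}\pi(M)$ --- one preimage per element of $T$ --- actually lies in $M$. Arranging this with a single $m$ is what forces the aggregate correction $\beta = \sum_{t \in T}\beta_t$, and checking that this one choice absorbs every torsion class is the step that needs genuine care; everything else is the standard Gordan-plus-common-denominator bookkeeping behind the full-dimensional Frobenius problem.
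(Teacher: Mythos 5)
Your proof is correct, but it cannot be compared against an internal argument: the paper gives no proof of this lemma at all, quoting it as ``well known'' with a pointer to \cite[Theorem 6.5.1]{Alfonsin}. What you have written is therefore a complete, self-contained justification rather than an alternative route. Your first step --- producing $c \in S$ with $c + (\sigma \cap \Lambda) \subseteq S$ --- is exactly the standard Gordan-plus-common-denominator proof of the full-dimensional Frobenius-type statement, and it is sound: $Nh_i \in S$ because a lattice point of a cone generated by lattice points is a nonnegative \emph{rational} combination of them; the finite remainder set $W$ gives $\sigma \cap \Lambda \subseteq W + S$; and summing the negative parts $b_w$ over $W$ yields the required $c$ (note $0 \in S$ since $M$ is a monoid, so the case of all quotients $d_i=0$ is covered). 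Your second step is the part that a bare citation arguably does not cover: the reference treats subgroups of $\mathbb{Z}^n$, whereas the lemma concerns an arbitrary finitely generated abelian group, so \emph{all} $|T|$ preimages of each point of $\pi(m) + \mathbb{R}_{\ge 0}\pi(M)$ must land in $M$. Your aggregate shift $\beta = \sum_{t \in T}\beta_t$ handles this correctly: for $\ell$ in the preimage, $\pi(\ell-\beta) \in c + (\sigma\cap\Lambda) \subseteq S$ lifts to some $\mu \in M$, the discrepancy $t = \ell - \beta - \mu$ is torsion, and $\ell = \mu + \alpha_t + \sum_{t' \ne t}\beta_{t'} \in M$. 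Two cosmetic remarks: full-dimensionality of $\sigma$ is never actually used (Gordan's lemma does not require it), and you have silently --- and correctly --- read the statement's $m + \mathbb{R}_{\ge 0}\pi(M)$ as $\pi(m) + \mathbb{R}_{\ge 0}\pi(M)$.
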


We now proceed to the proof of our main theorem.

\begin{proof}[Proof of Theorem~\ref{thm:main}]
The argument for the inclusion $\subseteq$ in Theorem~\ref{thm:main}
is essentially in the proof of \cite[Proposition 3.8]{Wang}.
For the reader's convenience, we sketch it here.

Suppose $I$ is tempting.
Observe that $\Pi \cap \overline{v+C_I} = \Pi \cap \overline{v'+C_I}$
for any $v,v' \in \operatorname{Pic}_{\mathbb{R}}(X)$
and $\Pi \cap \overline{C_I} = \Pi \cap \overline{C_{I^c}}$.
In view of Proposition~\ref{prop:forbidden_immaculate} and
Lemma~\ref{lem:FrobeniusCoin} applied to the group $ \operatorname{Pic}(X)$ and monoids $CS_I$
and $CS_{I^c}$,
there exist $v,v' \in \operatorname{Pic}(X)$
such that $v+C_I$ and $v'+C_{I^c}$ 
contain no images of immaculate line bundles under the map
$\operatorname{Pic}(X)\to\operatorname{Pic}_{\mathbb{R}}(X)$.
It remains to observe that for any $L\in D_I^\circ$ the set
$A = \overline{(v+C_I) \cup (v'+C_{I^c})}$
contains an open neighborhood of $L$
and no images of immaculate line bundles.
Thus $L$ is not in $\operatorname{Imm}^\infty(X)$.

\smallskip
We now establish the inclusion $\supseteq$, which is new.

\smallskip
Suppose $L \in \Pi$ is a point such that
$L \notin D^\circ_I$ for any tempting $I$. We first assume that 
$L$ is rational, so that the corresponding line
$\ell \subseteq \operatorname{Pic}_{\mathbb{R}}(X)$ has infinitely many 
images of elements of $\operatorname{Pic}(X)$ on it.
Pick a point $z_0$ in $Z^\circ$ which is an image of a line bundle, its existence
guaranteed by Lemma~\ref{lem:Zinterior_nonempty}.
Since $\ell$ is rational, there are infinitely many elements $\bar E$ in
the shifted line $z_0+\ell$ which are images of 
elements of $\operatorname{Pic}(X)$. These have $L\in \Pi$ as their (unique) accumulation point, and we will show that
each $\bar E$ is an image of an immaculate line bundle, thus establishing that $L\in \operatorname{Imm}^\infty(X)$.
By  Proposition \ref{prop:forbidden_immaculate} it suffices to show that
$z_0 + \ell$ is disjoint from all $F_I$ for tempting $I$.

\smallskip
By Lemma~\ref{lem:CIstrongConvex}, 
$C_I$ is a strongly convex rational polyhedral cone with
vertex at the origin.
Since $\ell$ is disjoint from the interior of $C_I$, there exists a nonzero linear functional $h:\operatorname{Pic}_{\mathbb R}(X)\to \mathbb R$ such that $h(C_I) \ge 0$
but $h(\ell) = 0$. 
By Proposition~\ref{prop:forbiddenZonotope},
we see that $z_0-q_I \in -C_I^\circ$, thus $h(z-q_I) < 0$. So for any
$v\in \ell$ we have $h(z_0+v) < h(q_I)$, while we have
$h(F_I) = h(q_I) + h(C_I)\geq h(q_I)$. 

\smallskip
We have thus established that the rational points of
\[
\Pi \setminus \bigcup_{I \textrm{ tempting}} D^\circ_I
\]
lie in $\operatorname{Imm}^\infty(X)$.
However, there are finitely many inequalities with rational coefficients
defining each $C_I$.
Moreover, there are only finitely many
tempting sets $I$.
Thus, the rational points of the set
$\Pi \setminus \bigcup_{I \textrm{ tempting}} D^\circ_I$
are dense.
Since all rational points lie in $\operatorname{Imm}^\infty(X)$
and both sets are closed, we have the desired inclusion.
\end{proof}

\begin{remark}
Our main theorem is phrased using the compactification of
$\operatorname{Pic}_{\mathbb{R}}(X)$ with boundary
$\Pi \cong \mathbb{RP}^{r-1}$.
Instead, we could have used the spherical boundary
$\left( \operatorname{Pic}_{\mathbb{R}}(X) \setminus \{0\}\right)
/ \mathbb{R}_{> 0} \cong S^{r-1}$.
However, in view of Remark~\ref{rem:SerreDuality},
there is no substantial difference between these two perspectives.
\end{remark}

\begin{remark}
From the proof of our main theorem, we see that if there are infinitely
many immaculate line bundles, then there are infinitely many outside the
forbidden cones.
However, there may still be infinitely many line bundles \emph{inside}
the forbidden cones as well.
For example, consider the stack $X=\mathbb{P}(2:3) \times \mathbb{P}^1$
where $\mathbb{P}(2:3)$ is a weighted projective line.
The line bundles $\mathcal{O}_X(1,n)$ are immaculate for every positive $n$,
but they all lie in the interior of the effective cone.
\end{remark}

\begin{remark}
In \cite{Perling}, Perling explores conditions for vanishing of
cohomology groups in the closely related context of
divisorial sheaves on toric varieties.
In particular, he finds a sufficient condition for the existence of
infinitely many immaculate divisorial sheaves in terms of the Iitaka
dimensions of divisors on the boundary of the nef cone.
There, the study of \emph{discriminantal arrangements} plays a similar
role as our use of the Thomsen Zonotope above.
\end{remark}

\begin{remark}
Finally, we point out that while the criterion from
Theorem~\ref{thm:main} is computable in theory,
finding all the tempting subsets and forbidden cones
is complicated in practice.
Another condition that can be checked directly on the fan
is conjectured in \cite[Section 5]{BorisovWang}.

An element of $\operatorname{Pic}_{\mathbb{R}}(X)$ can be represented as
a function $\psi$ on $N_{\mathbb{R}}$ which is linear on every cone of
$\Sigma$, which is well-defined up to a global linear function.
The maximal cones $\{\sigma\}$ of $\Sigma$ then correspond to a finite set of points
$\{\psi_\sigma\}$ in the dual space $M_{\mathbb{R}}$.
If the convex hull $\Lambda_\psi$ of these points is not
full-dimensional, then we obtain a point of $\operatorname{Imm}^\infty(X)$
by \cite[Theorem 2.14]{BorisovWang}.

While this is a sufficient condition for having infinitely many immaculate line
bundles, it is not known whether it is necessary --- even for toric varieties
of dimension $3$.
\end{remark}

\end{document}